
\documentclass[12pt,reqno,a4paper]{amsart}
\usepackage{amsmath,amssymb,amsfonts,amscd}
\usepackage[mathscr]{eucal}
\usepackage{hyperref,xcolor} 
\usepackage{comment,textcomp}

\setcounter{tocdepth}{1} 

\topmargin=0cm \advance\textheight by 0.3cm
\oddsidemargin=0cm \advance\textwidth by 1.4in
\evensidemargin=0cm 

\def\co{\colon\thinspace}

\newtheorem{theorem}{Theorem}

\newtheorem{proposition}{Proposition}

\theoremstyle{definition}
\newtheorem{example}{Example}


\newcommand{\f}{{\varphi}}

\newcommand{\p}{\partial}

\newcommand{\g}{\gamma}

\newcommand{\der}[2]{{\frac{\partial {#1}}{\partial {#2}}}}

\DeclareMathOperator{\DO}{DO}
\DeclareMathOperator{\hDO}{{DO_{\hbar}}}

\DeclareMathOperator{\symb}{\sigma}
\DeclareMathOperator{\symbf}{\sigma_{\text{full}}}

\newcommand{\lder}[2]{{\partial {#1}/\partial {#2}}}

\newcommand{\R}[1]{{\mathbb R}^{#1}}
\newcommand{\RR}{\mathbb R}
\newcommand{\CC}{\mathbb C}
\newcommand{\fun}{C^{\infty}}

\newcommand{\al}{{\alpha}}

\newcommand{\D}{{\Delta}}

\newcommand{\e}{{\varepsilon}}

\newcommand{\F}{{\Phi}}

\newcommand{\ps}{{\psi}}

\newcommand{\gt}{{\tilde g}}

\newcommand{\at}{{\tilde a}}

\newcommand{\Ft}{{\tilde F}}
\newcommand{\Ht}{{\tilde H}}
\newcommand{\Yt}{{\tilde Y}}
\newcommand{\Kt}{{\tilde K}}
\newcommand{\Lt}{{\tilde L}}

\unitlength=1em

\newcommand{\tto}{{\linethickness{2pt}
		  \,\begin{picture}(1,0)
                   \put(0,0.26){\line(1,0){0.95}}
                   \put(0,0){$\boldsymbol{\rightarrow}$}
                  \end{picture}
                  }\,
}

\newcommand{\fto}[1]{\stackrel{#1}{\vphantom{\rightrightarrows}\to}}

\newcommand{\dbar}{{\,\mathchar '26 \mkern-11mu d}}
\newcommand{\Dbar}{%
   {{D\mkern-14 mu
   \mathchoice{\raisebox{-2pt}{$\displaystyle \mathchar'26$}}
             {\raisebox{-2pt}{$\mathchar'26$}}
             {\raisebox{-1pt}{$\scriptstyle \mathchar'26$}}
             {\raisebox{-0.5pt}{$\scriptscriptstyle \mathchar'26$}}}
             \mkern 5 mu}}

\DeclareMathOperator{\funh}{\mathit{C^{\infty}_{\hbar}}}

\newcommand{\hpa}{{\hat p_y}^{\,\al}}


\title[Differential operators over a map]{On   differential operators over a map, \\
thick morphisms of supermanifolds, and symplectic micromorphisms}
\author{Ekaterina~Shemyakova}
\address{Department of Mathematics,  University of Toledo, Toledo,  Ohio, 43606, USA}
\email{ekaterina.shemyakova@utoledo.edu}
\author{Theodore~Voronov}
\address{Department of Mathematics,  University of Manchester, Manchester, M13 9PL,  UK  and
Faculty of Physics, Tomsk State University, Tomsk, 634050, Russia}
\email{theodore.voronov@manchester.ac.uk}
\thanks{Research of the first author was partially supported by NSF under grant  1708033. Research of the second author was partially supported by LMS grants.}
\dedicatory{Dedicated to the memory of Alexandre Mikhailovich Vinogradov}

\begin{document}
\begin{abstract}
We recall the notion of a differential operator over a smooth map (in linear and non-linear settings) and consider its versions  such as  formal $\hbar$-differential operators over a map. We study  constructions and examples of such operators, which include    pullbacks by thick morphisms and quantization of symplectic micromorphisms.
\end{abstract}

\maketitle
\tableofcontents

\section{Introduction}\label{sec.intro}

 The notion of a ``differential operator over a map'' (or, in algebraic version, over an algebra homomorphism) is not new. It can be traced to Gabriel~\cite[Expos\'{e} VIIA]{sga3:7}, and can be seen as a natural extension of the algebraic definition of a differential operator on  a scheme or a commutative algebra  by Grothendieck~\cite[\S16.8]{groth:ega4-4}. (For the latter notion, see also Vinogradov~\cite{vinog:1972} and \cite{vinog:vkl1}, and also Koszul~\cite{koszul:crochet85}.)

 However, in spite of its being very ``natural'', this notion is missing from standard texts. Recently constructions appeared such as \emph{thick morphisms} between manifolds or supermanifolds (due to the second author, see~\cite{tv:microformal}) that provide examples of differential operators over   maps, or   versions or modifications of thereof. The purpose of this paper is to review this central notion and its variants, which include  non-linear operators, formal, pseudo- and $\hbar$-(formal, pseudo-) versions. We give constructions and examples of such operators for $\R{n}$ and for (super)manifolds. In particular, we consider operators arising as quantization of symplectic micromorphisms introduced in a recent work by Cattaneo, Dherin and Weinstein and compare them  with pullbacks by quantum  thick morphisms~\cite{tv:oscil,tv:qumicro,tv:microformal}.

 We dedicate this work to  A.~M.~Vinogradov (1938--2019), a remarkable man and mathematician,    friendship with whom we shall always treasure in our memories.

\section{Differential operators over maps and related concepts}\label{sec.do}

Let $\f\co M_1\to M_2$ be a smooth map of differentiable manifolds, which in local coordinates is expressed as $y^i=\f^i(x)$. Then a \emph{differential operator  over a map} $\f$ of \emph{order} $\leq k$ is a linear operator $L\co \fun(M_2)\to \fun(M_1)$ that in local coordinates can be written as
\begin{equation}\label{eq.locdef}
    L(g) = \sum_{|\al|\leq k} L_{\al}(x)\, \p_y^{\,\al}(g)(\f(x))\,.
\end{equation}
(Here $\al$ is a multi-index, $\p_y^{\,\al}=\p_{y^1}^{\,\al_1}\ldots \p_{y^m}^{\,\al_m}$, $|\al|=\al_1+\ldots+\al_m$.)
In other words, we take a function of variables $y^i$, differentiate it with respect to $y^i$   and substitute in the result the variables $y^i$ as functions of $x^a$ (as given by the map $\f$), and then take a linear combination of these derivatives-followed-by-substitution with  the coefficients depending on $x$.

(Everywhere in this section we speak about $\fun$ functions and   maps, but it is equally possible to consider real-analytic or complex-analytic functions or  formal power series.)

\begin{example}
A \emph{vector field over a map} (or \emph{along a map}) gives an example of a differential operator over a map of order $\leq 1$. Such a vector field over a map $\f\co M_1\to M_2$ is defined as a section of $\f^*(TM_2)\to M_1$, i.e. a map $Y\co M_1\to TM_2$ such that $Y(x)\in T_{\f(x)}M_2$. As an operator on functions, in coordinates,
\begin{equation}
    Y=Y^i(x)\,\der{}{y^i}_{|y=\f(x)}\,.
\end{equation}
Alternatively, a vector field $Y$ over a map  $\f$ can be understood as an infinitesimal variation of $\f$, i.e. a ``map'' (depending of a formal parameter $\e$, $\e^2=0$) $\f_{\e}\co M_1\to M_2$, $\f_{\e}(x):=\f(x)+\e Y(x)$. A particular example is the velocity of a parameterized curve, which is a vector field $d\g/dt$ over $\g\co (a,b)\to M$. Another particular example arises when there is a family of maps $\f_t\co M_1\to M_2$ and the derivative $Y_t:=\lder{\f_t}{t}$ is a vector field over $\f_t$ for each $t$. (This vector field over a map $Y_t$ appears in differential geometry for example in Cartan  homotopy formula  for differential forms.)
\end{example}

An algebraic version of the same concept can be formulated as follows. Let $\al\co A\to B$ be an algebra homomorphism of commutative algebras. Then   \emph{differential operators  over an algebra homomorphism $\al$}  (shortly: \emph{d.o.'s} over $\al$) of order $\leq k$ (or $k$th order) are defined inductively by the following conditions. A \emph{differential operator   over   $\al$ of order zero} is a linear map $L\co A\to B$ satisfying
\begin{equation}
    L(aa')= \al(a)\,L(a')
\end{equation}
for all $a,a'\in A$. If $A$, $B$ are algebras with a unit and $\al$ preserves units, one can see that such an $L$ acts as
\begin{equation}
    L(a)=L(a1)=\al(a)L(1)=\al(a)b\,,
\end{equation}
where $b=L(1)\in B$, i.e. $L$ is the combination of the action of the homomorphism $\al$ and a multiplication operator. Now for $k>0$, a linear map $L\co A\to B$ is a \emph{differential operator   over   $\al$ of order $k$} if for all $a,a'\in A$,
\begin{equation}
    L(aa')= \al(a)\,L(a') + L_1(a')
\end{equation}
where $L_1\co A\to B$ is a differential operator   over   $\al$ of order $k-1$ (depending on $a\in A$).
\begin{example}
One can see that  a first order differential operator $L$  over an algebra homomorphism  $\al\co A\to B$ satisfying $L(1)=0$ is nothing but a \emph{derivation over $\al$}, i.e. satisfies the Leibniz rule
\begin{equation}
    L(a_1a_2) = L(a_1)\al(a_2)+ \al(a_1)\,L(a_2)\,,
\end{equation}
and conversely. Such operators $L$ define infinitesimal variations of algebra homomorphisms, $\al_{\e}=\al+\e\,L\co A\to B$. (This is an the algebraic version of a vector field over a smooth map.)
\end{example}

A version of the same definition for   superalgebras includes signs: \emph{$L$ is a differential operator over a superalgebra homomorphism $\al\co A\to B$} between commutative superalgebras if for all $a,a'\in A$
\begin{equation}
    L(aa') = (-1)^{\at_1\tilde L}\al(a)\,L(a') + L_1(a')\,,
\end{equation}
where $L_1$ is order $k-1$.

Like in the usual case, one can show that for algebras of smooth functions the algebraic definition and the coordinate definition give the same notion. If for (super)manifolds $M_1$ and $M_2$ we denote by $\DO^k(M_1\stackrel{\f}{\to}M_2)$ the set of all $k$th order  differential operators over a smooth map $\f\co M_1\to M_2$ and denote
\begin{equation}
    \DO^k(M_1,M_2)=\bigcup_{\f\co M_1\to M_2}\; \DO^k(M_1\stackrel{\f}{\to}M_2)
\end{equation}
and use the similar notation for algebras, then
\begin{align}
    \DO^k(M_1\stackrel{\f}{\to}M_2)&= \DO^k(\fun(M_2)\stackrel{\f^*}{\to}\fun(M_1))\,,\\ \DO^k(M_1,M_2)&=\DO^k(\fun(M_2),\fun(M_1))\,.
\end{align}

We shall refer to the map $\f\co M_1\to M_2$ for an operator $L\in \DO(M_1,M_2)$  as the \emph{core} or \emph{carrier} of $L$.

One can  check that differential operators over maps with matching source and target can be composed, so if $L\in \DO^k (M_1\stackrel{\f_{21}}{\to}M_2)$ and $K\in \DO^{\ell}\, (M_2\stackrel{\f_{32}}{\to}M_3)$, then
\begin{equation}
    L\circ K\in \DO^{k+\ell}\, (M_1\stackrel{\f_{32}\circ \f_{21}\vphantom{\int_a^b}}{\to}M_3)\,.
\end{equation}
Therefore we obtain a category whose arrows are differential operators over maps. Denote it $\DO$. It contains as a subcategory the (opposite to the) usual category of (super)manifolds and smooth maps, if one identifies a map $\f\co M_1\to M_2$ with a zero-order differential operator over itself, $L=\f^*$.

The category $\DO$ is not additive, as one cannot always add elements of $\DO(M_1,M_2)$, unless they are   over the same map $\f$. So the category $\DO$ is not a so straightforward   generalization of the usual algebra of differential operators $\DO(M)$ for a fixed manifold $M$ (which are   operators over the identity map). Still, it makes sense to ask about ``generators'' of this category  with respect to compositions and sums\,---\, similarly to   the description of the algebra of (polynomial) differential operators on $\R{n}$ as the Weyl algebra. It seems that as such generators of this ``semi-additive'' category $\DO$ one can take: (1) all   vector fields over maps $Y$, for all $\f\co M\to N$\,; (2) all pull-backs $\f^*$ by   maps  $\f\co M\to N$ for all $M$, $N$\,;   and (3) all operators of multiplication by   functions $f\in\fun(M)$,  for all $M$, satisfying  the ``Heisenberg-type'' relation
\begin{equation}
    Y\circ g=(-1)^{\Yt\gt}\f^*(g)\circ Y+Y(g)
\end{equation}
and the relation $\f^*\circ g=\f^*(g)\f^*$\,.

It is not of great difficulty to generalize the above definitions to the case of operators between  modules over commutative superalgebras or, in the differential-geometric setting, to operators acting on sections of (super) vector bundles. In the expression in local coordinates~\eqref{eq.locdef}, this would amount to consider matrix coefficients. We shall not go in  this direction further. Instead we shall discuss two particular variations of our theme: non-linear operators and $\hbar$-formal operators.

We say that a mapping  $L\co \fun(M_2)\to \fun(M_1)$ is a \emph{non-linear differential operator over a smooth map} $\f\co M_1\to M_2$  of order $\leq k$  if  $L$ sends a function $g\in \fun(M_2)$ to a function $L(g)=f\in \fun(M_1)$ that in  local coordinates   is expressed as a polynomial in partial derivatives $\p_y^{\,\al}g$ with $|\al|\leq k$ evaluated at $y=\f(x)$ with the coefficients depending on $x$\,:
\begin{equation}\label{eq.nonlindef}
    L(g)(x)=P\left(x,\p g,\p^2 g, \ldots, \p^k g\right)_{|y=\f(x)}\,,
\end{equation}
where by $\p^rg$ we denote  the whole collection of partial derivatives $\p_y^{\,\al}g(y)$ for all $\al$ with $|\al|=r$. The right-hand side of~\ref{eq.nonlindef} is a polynomial in $\p g(\f(x)),\p^2 g(\f(x)), \ldots, \p^k g(\f(x))$.  (Linear operators considered above are of course a particular case,   when the polynomial $P$ in~\ref{eq.nonlindef} is linear in the derivatives.)

One can say this equivalently by  introducing a   bundle   $J^k(M_1\stackrel{\f}{\to}M_2)$ over the manifold $M_1$, as the pull-back bundle
\begin{equation*}
    J^k(M_1\stackrel{\f}{\to}M_2):=\f^*\bigl(J^k(M_2)\bigr)
\end{equation*}
of the jet bundle $J^k(M_2)=J^k(M, \RR)$ over $M_2$. Then a non-linear differential operator of order $\leq k$ over $\f$ is a fiberwise-polynomial function on the total space $J^k(M_1\stackrel{\f}{\to}M_2)$. Again, as in the linear case, this can be further generalized to sections of (possibly non-linear) fiber bundles instead of scalar functions. Everywhere where we say manifold, we actually can say supermanifold (or graded manifold, see~\cite{tv:gradedmicro}).

Coming back to the linear case, an important variation of the definition above is as follows.

Let $\hbar$ be a formal parameter to which we shall refer to as ``Planck's constant''. Consider smooth functions on (super)manifolds that depend on $\hbar$ as formal power series (with non-negative powers only).  For them we use the notation $\funh(M)$. (Other classes can be also useful, such as e.g. formal oscillatory exponentials with coefficients from $\funh(M)$.) Now $\CC[[\hbar]]$ is the ground ring instead of $\CC$. (From this point, it is convenient to work with complex-valued functions.)

For a given map $\f\co M_1\to M_2$, we say that a linear (i.e. $\CC[[\hbar]]$-linear) operator
 \begin{equation*}
    L\co \funh(M_2\to \funh(M_1)
 \end{equation*}
is an \emph{$\hbar$-differential operator over $\f$ of order $\leq k$} (abbreviation: $\hbar$-d.o.) if for every $g\in \funh(M_2)$,
\begin{equation}\label{eq.hdef}
     L\circ g-(-1)^{\gt\Lt} \f^*(g)\circ L = -i\hbar L_1\,,
\end{equation}
where $L_1\co \funh(M_2\to \funh(M_1)$ is an $\hbar$-differential operator over $\f$ of order $\leq k-1$ and all $\hbar$-differential operator over $\f$ of order $\leq 0$ are zero.
\begin{example}
As before, we can see that an $\hbar$-d.o. of order $0$ has the form $L=f_0\cdot \f^*$, where $f_0=L(1)$, so at this stage no difference arises. However, for an $\hbar$-d.o. of order $\leq 1$, we can deduce from the definition that every such operator has the form
\begin{equation*}
    L=-i\hbar D + f_0\cdot\f^*\,,
\end{equation*}
where  $D\co \funh(M_2)\to \funh(M_1)$ is a derivation over $\f^*$, i.e. a vector field over $\f$, and $f_0\in \funh(M_1)$, $f_0=L(1)$.
\end{example}

In general, we can deduce that in local coordinates an $\hbar$-differential operator over a map $\f$ of order $\leq k$ has the form
\begin{equation}\label{eq.locdefh}
    L(g) = \sum_{|\al|\leq k} L_{\al}(x)\, (-i\hbar\p_y)^{\,\al}(g)(\f(x))\,,
\end{equation}
where the coefficients $L_{\al}(x)$ are power series in $\hbar$. In other words, we have a special case of~\eqref{eq.locdef} with an extra condition that every partial derivative $\lder{}{y^a}$ carries a factor of $-i\hbar$. Denote by ${\hat p_y}^{\,\al}$ the operator $\f^*\circ (-i\hbar \p_y)^{\,\al}$. (Warning: $\hpa$ is \emph{not} a product!) Then
\begin{equation}\label{eq.locexpr}
    L  = \sum_{|\al|\leq k} L_{\al}(x)\, \hpa
\end{equation}
is a general form of an $\hbar$-d.o. over $\f$. This expression of course depends on a choice of local coordinates, and it is not difficult to deduce a transformation law for the symbols $\hpa$, as well as commutation relations with functions on $M_2$.

There is an important observation similar to the one made in~\cite{shemy:koszul}: it is possible to introduce a grading into the space of $\hbar$-d.o.'s over a map (besides filtration given by order). We define the \emph{degree} of an $\hbar$-differential operator over $\f$ by the following rules:
\begin{equation}\label{eq.defdeg}
    \deg \hpa:=|\al|\,, \quad \deg \hbar=1\,, \quad \deg f(x)=0\,
\end{equation}
(compare with the definition of  total degree of an $\hbar$-d.o. on a manifold $M$ in~\cite[\S3.2]{shemy:koszul}).

\begin{proposition}\label{prop.degree}
The degree defined by~\eqref{eq.defdeg} does not depend on a choice of local coordinates on $M_1$ and $M_2$.
\end{proposition}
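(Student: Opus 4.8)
The plan is to verify directly that the degree, as defined by~\eqref{eq.defdeg} on the monomials $L_\al(x)\,\hpa$, is preserved under the two kinds of coordinate changes that generate all coordinate changes: a change of coordinates on the source $M_1$, and a change of coordinates on the target $M_2$. A change on $M_1$ replaces $x$ by new coordinates $x'=x'(x)$; this only affects the coefficient functions $L_\al$, turning them into other functions of $x'$, and it does not touch the factors $-i\hbar\,\p_y^{\,\al}$ nor the pullback $\f^*$ (only the \emph{expression} of $\f^*$ in the new source coordinates changes, not the operator). Since $\deg f(x)=0$ for any function of the source coordinates by~\eqref{eq.defdeg}, the monomial $L_\al(x)\,\hpa$ keeps the same degree $|\al|+(\text{power of }\hbar\text{ in }L_\al)$. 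So the source change is immediate.

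The substantive case is a change of coordinates on the target, $y^i=y^i(y')$, equivalently $y'^j = y'^j(y)$. Here I would argue as follows. By the chain rule, each $-i\hbar\,\p_{y^i}$ is a $\fun(M_2)$-linear combination of the $-i\hbar\,\p_{y'^j}$, namely $-i\hbar\,\p_{y^i} = \sum_j \bigl(\p y'^j/\p y^i\bigr)\,(-i\hbar\,\p_{y'^j})$, with coefficients $\p y'^j/\p y^i$ that are functions of $y$. Iterating this for a multi-index $\al$ of weight $|\al|=r$, we get that $(-i\hbar\,\p_y)^{\,\al}$ equals a sum of terms, each of the form (function of $y$)$\cdot(-i\hbar\,\p_{y'})^{\,\be}$ with $|\be|\le r$, and crucially each such term carries $-i\hbar$ to a power \emph{at least} $|\be|$ less than $r$; more precisely, each application of the chain rule that lowers $|\be|$ by one also frees up one factor of $-i\hbar$, which then stands in front as part of the coefficient. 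Thus every term in the expansion of $(-i\hbar\,\p_y)^{\,\al}$ has the form $\hbar^{\,r-|\be|}\cdot(\text{function of }y)\cdot(-i\hbar\,\p_{y'})^{\,\be}$, which has degree $(r-|\be|)+0+|\be| = r = |\al|$. Pulling back by $\f$ converts the functions of $y$ into functions of $x$ (degree $0$) and turns $(-i\hbar\,\p_{y'})^{\,\be}$ into $\widehat{p_{y'}}^{\,\be}$, so $\hpa = \f^*\circ(-i\hbar\,\p_y)^{\,\al}$ is rewritten as a sum of monomials each of the form (function of $x$, possibly involving $\hbar$)$\cdot\,\widehat{p_{y'}}^{\,\be}$, each of total degree exactly $|\al|$. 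Consequently $L_\al(x)\,\hpa$ is a sum of degree-$|\al|$ monomials in the new target coordinates, and since $\deg$ is additive on products and $\hbar$, the total degree of $L$ is unchanged.

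To package this cleanly, I would prove an auxiliary identity: under $y=y(y')$ one has
\begin{equation*}
    \f^*\circ(-i\hbar\,\p_y)^{\,\al} = \sum_{|\be|\le|\al|} c^{\,\al}_{\be}(x,\hbar)\,\widehat{p_{y'}}^{\,\be}\,,
\end{equation*}
where each $c^{\,\al}_{\be}(x,\hbar)$ is $\hbar^{\,|\al|-|\be|}$ times a function of $x$ (obtained by pulling back a universal polynomial in the derivatives $\p y'/\p y$), so that $\deg\bigl(c^{\,\al}_{\be}(x,\hbar)\,\widehat{p_{y'}}^{\,\be}\bigr) = |\al|$ for every $\be$ occurring. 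This identity is essentially the ``transformation law for the symbols $\hpa$'' already alluded to after~\eqref{eq.locexpr}, so one may also simply quote it; the point here is the bookkeeping of the powers of $\hbar$. Given the identity, the proposition follows, since the map $L\mapsto$ its expression in new coordinates sends each degree-$d$ homogeneous component into degree-$d$ elements, hence preserves the grading.

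The main obstacle, and the only place real care is needed, is exactly this $\hbar$-bookkeeping in the chain-rule expansion: one must check that no term of $(-i\hbar\,\p_y)^{\,\al}$ ends up with \emph{too few} powers of $\hbar$ relative to the order of the surviving derivative $(-i\hbar\,\p_{y'})^{\,\be}$ — i.e. that a drop of one in the differential order is always compensated by a gain of one factor of $\hbar$. This is a straightforward induction on $|\al|$: differentiating a coefficient $\p y'/\p y$ (a function of $y$) by $-i\hbar\,\p_{y^i}$ produces exactly one new factor of $\hbar$ and does not raise the order of the remaining derivative operator, which is the needed compensation. Everything else — additivity of $\deg$ under composition and under multiplication by $\hbar$, and the triviality of the source-coordinate change — is formal.
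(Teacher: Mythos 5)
Your proof is correct and follows essentially the same route as the paper's: expand each $-i\hbar\,\p_{y^i}$ via the chain rule in the new target coordinates and observe that moving the Jacobian coefficients to the left via the Leibniz rule trades each lost derivative $\hat p_{i'}$ for exactly one factor of $-i\hbar$, so the total degree is unchanged. Your explicit treatment of the (trivial) source-coordinate change and the packaged transformation identity for $\hpa$ are just more detailed bookkeeping of the same argument.
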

\begin{proof}
We have $\hpa=\f^*\circ (\hat p_1^{\al_1}\ldots \hat p_m^{\al_m})$, where $\hat p_i=-i\hbar \lder{}{y^i}$. Under a change of coordinates, each operator $\hat p_i$ becomes a linear combination (with coefficients independent of $\hbar$) of similar operators relative   ``new'' coordinate system. When we move the coefficients to the left, we use the Leibniz rule and at each step we lose one operator $\hat p_{i'}$ but gain one factor of $-i\hbar$. So the total degree does not change.
\end{proof}

Strictly speaking, degree is well-defined only on operators ``of finite type'', i.e. those whose coefficients are polynomials in $\hbar$. By taking infinite sums of such operators $L_{[k]}$, $\deg L_{[k]}=k$, of all degrees $k=0, 1, 2, \ldots $,
\begin{equation}\label{eq.formal}
    L=L_{[0]} +L_{[1]}+ L_{[2]}+\ldots
\end{equation}
we arrive at the notion of \emph{formal $\hbar$-differential operators over a map $\f$}. (In the next section we shall push this further   to obtain  ``pseudodifferential operators over a smooth map''.) We shall denote the space of all formal $\hbar$-d.o.'s over $\f\co M_1\to M_2$ by $\hDO(M_1\fto{\f}M_2)$ and the space of all  $\hbar$-d.o.'s over all maps from $M_1$ to $M_2$ by $\hDO(M_1,M_2)$,
\begin{equation*}
    \hDO(M_1,M_2)= \bigcup_{\f\co M_1\to M_2}\;\hDO(M_1\fto{\f}M_2)\,.
\end{equation*}
By $\hDO^{\![k]}(M_1\fto{\f}M_2)$ and  $\hDO^{\![k]}(M_1,M_2)$ we denote the corresponding spaces of operators of degree $k$, so
\begin{equation*}
    \hDO\,(M_1,M_2)=\prod_{k=0}^{+\infty} \hDO^{\![k]}(M_1,M_2)
\end{equation*}

The same argument as in the proof of Proposition~\ref{prop.degree} shows that modulo $\hbar$, the operators $\hpa$ behave under a change of coordinates as products of commutating variables. Indeed, we have a product of operators $\hat p_i$ followed by the substitution $y=\f(x)$. If we change coordinates on $M_2$, we obtain that for a single such operator,
\begin{equation*}
    \hat p_i=\der{y^{i'}}{y^i}(y)\,\hat p^{i'}\,.
\end{equation*}
Hence, for the transformation formula for a product   $\hat p_{i_1}\ldots p_{i_k}$, we need to move the coefficients of the Jacobi matrix to the left of all ``new'' $\hat p_{i'}$. Each time, by using the commutation relation, we gain an extra term proportionate to $\hbar$. Hence modulo $\hbar$ (and taking into account the substitution $y=\f(x)$) we arrive at the transformation law of the product of commuting variables $p_i$, where for each variable we have
\begin{equation*}
    p_i= \der{y^{i'}}{y^i}\bigl(\f(x)\bigr)\,p_{i'}\,.
\end{equation*}
This is exactly the transformation law for fiber coordinates in the bundle $\f^*(T^*M_2)\to M_1$. We have arrived at the following statement.
\begin{theorem}
To every formal $\hbar$-differential operator $L$  over a map $\f\co M_1\to M_2$ we can canonically assign a function $H=\symb(L)$ on the bundle $\f^*(T^*M_2)$ by setting $\hbar=0$ in the coefficients in~\eqref{eq.locexpr} and replacing the operators $\hpa$ by the monomials $p^{\al}$, where $p_i$ are fiberwise coordinates. The function $\symb(L)$ is a formal power series in $p_i$ and a polynomial on $p_i$ if $L$ is an $\hbar$-d.o. over $\f$. \qed
\end{theorem}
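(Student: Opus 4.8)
The plan is to check that the local recipe of the theorem --- set $\hbar=0$ in the coefficients of the expression \eqref{eq.locexpr} and replace each operator $\hpa$ by the commuting monomial $p^{\al}$ in the fibre coordinates $p_i$ --- produces one and the same function on the total space of $\f^*(T^*M_2)$, independently of the charts chosen on $M_1$ and on $M_2$. Within a fixed chart there is nothing to prove about the recipe itself: the coefficients $L_{\al}(x)$ in \eqref{eq.locexpr} are uniquely recovered from $L$ (evaluate $L(g)$ at a point $x_0$ and let $g$ run over functions of $y$ with prescribed derivatives at $\f(x_0)$), so $\symb(L)$ is unambiguous per chart, and the whole content of the statement is chart-independence. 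I would treat the two kinds of coordinate changes separately.

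First, a change of coordinates $x \rightsquigarrow x'$ on $M_1$ leaves the operators $\hpa=\f^*\circ(-i\hbar\p_y)^{\al}$ untouched (they involve only $y$-derivatives followed by the pullback) and merely re-expresses the coefficients $L_{\al}$ in the new chart. Since passing to $\hbar=0$ commutes with this relabelling and the $p_i$ are unaffected, the expression $\sum_{\al}L_{\al}(x)|_{\hbar=0}\,p^{\al}$ transforms as a function on $M_1$ pulled back to $\f^*(T^*M_2)$, as it must.

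The substantive step is invariance under a change $y \rightsquigarrow y'$ on $M_2$, and here I would reuse the computation made in the paragraph preceding the theorem. Each $\hat p_i=-i\hbar\,\p_{y^i}$ becomes $\der{y^{i'}}{y^i}(y)\,\hat p_{i'}$ with $\hbar$-independent Jacobian coefficients; to bring a product $\hat p_{i_1}\cdots\hat p_{i_k}$ back into the normal-ordered form of \eqref{eq.locexpr} one moves these coefficients to the left of all the $\hat p_{i'}$, and by the Leibniz rule each commutation costs one extra factor of $-i\hbar$. Hence, after the substitution $y=\f(x)$, the transformation law of $\hpa$ coincides, up to terms carrying strictly positive powers of $\hbar$, with that of the commuting monomials under $p_i=\der{y^{i'}}{y^i}(\f(x))\,p_{i'}$ --- which is exactly the transition cocycle for fibre coordinates on $\f^*(T^*M_2)$. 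Therefore, when the local expressions for $L$ in the two charts are compared and $\hbar$ is then set to $0$ in the coefficients, all normal-ordering corrections disappear and the two formulas define the same function on $\f^*(T^*M_2)$ over the chart overlap.

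Finally one reads off the shape of $\symb(L)$: for an $\hbar$-d.o.\ over $\f$ of order $\leq k$ the sum in \eqref{eq.locexpr} runs over $|\al|\leq k$, so $\symb(L)$ is a polynomial in the $p_i$ of degree $\leq k$; for a formal $\hbar$-d.o.\ \eqref{eq.formal} one adds up the homogeneous pieces $\symb(L_{[d]})$, each of degree $d$ in $p$ by the grading \eqref{eq.defdeg} and Proposition~\ref{prop.degree}, obtaining a formal power series in the $p_i$. I do not expect a genuine obstacle here; the only point demanding care is the bookkeeping that guarantees every correction produced by either coordinate change carries a positive power of $\hbar$ and is thus killed by the $\hbar\to0$ specialization --- precisely the mechanism already established in the proof of Proposition~\ref{prop.degree}.
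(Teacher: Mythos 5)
Your proposal is correct and follows essentially the same route as the paper: the paper's entire justification (given in the paragraph preceding the theorem, which is why the statement carries a \qed) is precisely the observation that under a change of coordinates on $M_2$ the operators $\hpa$ transform, modulo terms carrying positive powers of $\hbar$ generated by normal-ordering, like the commuting fibre coordinates $p_i$ of $\f^*(T^*M_2)$, by the same Leibniz-rule bookkeeping as in Proposition~\ref{prop.degree}. Your additional remarks --- that the coefficients $L_{\al}$ are uniquely determined within a chart and that coordinate changes on $M_1$ act trivially on the $\hpa$ --- are correct and only make explicit steps the paper leaves tacit.
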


For the simplicity of notation we use $\fun(T^*M)$ and similar  also for functions that are formal power series along the fibers.

The function $\symb(L)$, a power series or a polynomial in the fiber variables on $\f^*T^*M_2$, is called the \emph{principal symbol} of $L$. It is a new notion. (Compare with the definitions of the principal symbol of a formal $\hbar$-differential operator  on a   manifold   in \cite{shemy:koszul} and   in \cite{tv:microformal}.)

Suppose we have maps $\f_{21}\co M_1\to M_2$ and $\f_{32}\co M_2\to M_3$ and operators
\begin{equation*}
    L\in \hDO(M_1\fto{\f_{21}}  M_2)\,, \quad K\in \hDO(M_2\fto{\f_{32}}  M_3)\,.
\end{equation*}
We have the composition
 \begin{equation*}
    L\circ K\in \hDO(M_1\fto{\f_{32}\circ\f_{21}}  M_3)\,.
 \end{equation*}
What can be said about the principal symbols? We know what to expect in the classical situation of a single manifold. To be able to say that ``the principal symbols multiply'', we need actually to introduce the corresponding multiplication. The problem is that they are functions on different bundles. However, they possess nice functorial properties. Namely, if $H\in \fun(\f_{21}^*T^*M_2)$ and $F\in \fun(\f_{32}^*T^*M_3)$, there are the  \emph{pull-back} $\f_{21}^*(F)\in \fun(\f_{31}^*T^*M_3)$ and the \emph{push-forward} ${\f_{32}}_*(H)\in \fun(\f_{31}^*T^*M_3)$. In a self-explanatory notation for the position and momentum variables, $H=H(x_1,p_2)$, $F=F(x_2,p_3)$ and
\begin{equation*}
    {\f_{32}}_*(H)=H\Bigl(x_1,\der{x_3}{x_2}p_3\Bigr)\quad \text{and} \quad \f_{21}^*(F)=F\bigl(x_2(x_1),p_3\bigr)\,.
\end{equation*}
We define the \emph{product} of functions $H\in \fun(\f_{21}^*T^*M_2)$ and $F\in \fun(\f_{32}^*T^*M_3)$ to be a function $HF=FH$ (in the supercase $HF=FH(-1)^{\Ft\Ht}$) on the bundle $\f^*_{31}T^*M_3$, where $\f_{31}:=\f_{32}\circ \f_{21}$, given by
\begin{equation}\label{eq.prodsymb}
    H\cdot F:={\f_{32}}_*(H)\,\f_{21}^*(F)\,,
\end{equation}
where at the right-hand side is the usual product of functions on $\f^*_{31}T^*M_3$.

\begin{theorem}
For formal $\hbar$-differential operators over maps,
\begin{equation}\label{eq.symbprod}
    \symb(L\circ K)=\symb(L)\cdot\symb(K)\,.
\end{equation}
\end{theorem}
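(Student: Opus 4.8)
The plan is to verify~\eqref{eq.symbprod} by a local-coordinate computation, reducing the general case to the multiplicative behaviour of the symbols $\hpa$ modulo $\hbar$ already analysed before the statement. First I would fix local coordinates $x$ on $M_1$, $y$ on $M_2$, $z$ on $M_3$, and write $L=\sum_{|\al|\leq k}L_\al(x)\,\hpa$ as in~\eqref{eq.locexpr} (with $\hpa=\f_{21}^*\circ(-i\hbar\p_y)^\al$) and similarly $K=\sum_{|\be|\leq\ell}K_\be(y)\,\hat q_z^{\,\be}$ with $\hat q_z^{\,\be}=\f_{32}^*\circ(-i\hbar\p_z)^\be$. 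By linearity and continuity in the $\hbar$-adic topology, it suffices to treat a single monomial $L=L_\al(x)\,\f_{21}^*\circ(-i\hbar\p_y)^\al$ and $K=K_\be(y)\,\f_{32}^*\circ(-i\hbar\p_z)^\be$.

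The core computation is to expand $L\circ K$. The operator $(-i\hbar\p_y)^\al$ must be commuted past the coefficient $K_\be(y)$ and then past $\f_{32}^*$. Commuting past $K_\be(y)$ produces, by the Leibniz rule, a leading term $K_\be(\f_{21}(x))\,(-i\hbar\p_y)^\al$ together with lower-degree corrections, each of which (by the same bookkeeping as in the proof of Proposition~\ref{prop.degree}) trades one factor $\hat p_{i'}$ for one factor of $-i\hbar$, hence raises the $\hbar$-power and drops the $p$-degree: these corrections contribute nothing to $\symb$. Commuting $(-i\hbar\p_y)^\al$ past $\f_{32}^*$ converts $\p_{y^i}$ into $\der{z^j}{y^i}(\f_{32}(y))\,\p_{z^j}$ by the chain rule, again with Leibniz corrections that are higher order in $\hbar$ and so invisible to the symbol. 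Setting $\hbar=0$ in the resulting coefficients and replacing each surviving $\hat p$, $\hat q$ by the fibre coordinate, one finds that the leading term of $L\circ K$ has symbol $L_\al(x)\,K_\be(\f_{21}(x))\,\bigl(\der{z}{y}(\f_{32}\circ\f_{21}(x))\,p_3\bigr)^\al p_3^{\,\be}$, which is exactly ${\f_{32}}_*\bigl(\symb(L)\bigr)\cdot\f_{21}^*\bigl(\symb(K)\bigr)$ in the notation preceding~\eqref{eq.prodsymb}; invoke the definition~\eqref{eq.prodsymb} to conclude. One must also check the sign in the supercase, which comes out of tracking the Koszul signs in moving $K_\be$ and $\f_{32}^*$ past the odd parts of $(-i\hbar\p_y)^\al$, and matches the factor $(-1)^{\Ft\Ht}$ built into the definition of the product.

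The main obstacle, and the only place any care is needed, is the bookkeeping that every Leibniz-rule correction genuinely strictly increases the $\hbar$-degree while the $p$-degree never increases, so that the infinitely many correction terms neither conspire to contribute at degree zero in $\hbar$ nor spoil the identification of the leading term. This is precisely the mechanism already established in Proposition~\ref{prop.degree} and in the discussion of the transformation law for the $\hpa$; here it must be run simultaneously through two commutations (past a coefficient and past a pullback) and tracked through the composition, but no new idea is required. A secondary, purely formal point is to justify reducing from formal $\hbar$-d.o.'s~\eqref{eq.formal} to the finite-type case: since $\symb$ only sees the $\hbar^0$-part and the product~\eqref{eq.prodsymb} is continuous for the grading, the identity for each homogeneous component extends to the product formal series.
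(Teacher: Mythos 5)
Your proposal is correct and is essentially the paper's own argument: the paper's proof consists of the single line ``Directly by the definitions of the principal symbol and the product~\eqref{eq.prodsymb}'', and your local-coordinate expansion --- commuting $(-i\hbar\p_y)^\al$ past the coefficient and past $\f_{32}^*$, with all Leibniz corrections acquiring an explicit factor of $\hbar$ and hence dying under $\symb$ --- is precisely the computation that one-liner leaves to the reader, resting on the same $\hbar$-bookkeeping as Proposition~\ref{prop.degree}. No discrepancy with the paper's route; you have merely made it explicit.
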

\begin{proof}
Directly by the definitions of the principal symbol and the product~\eqref{eq.prodsymb}.
\end{proof}

Suppose we have a commutative diagram of smooth maps:
\begin{equation}\label{eq.comdiag}
    \begin{CD}
    M_1 @>{\f_{21}}>> M_2 \\
    @V{\ps_{31}}VV @VV{\ps_{42}}V \\
    M_3 @>>{\f_{43}}> M_4
    \end{CD}\,,
\end{equation}
so $\ps_{42}\circ \f_{21}=\f_{43}\circ \ps_{31}$\,, and suppose we have formal  $\hbar$-d.o.'s $L_{12}$ over $\f_{21}$, $L_{34}$ over $\f_{43}$, $K_{13}$ over $\ps_{31}$, and $K_{24}$ over $\ps_{42}$. Since the diagram~\eqref{eq.comdiag} is commutative, the compositions $L_{12}\circ K_{24}$ and $K_{13}\circ L_{34}$ are defined over the same map, so can be compared. Consider the difference
\begin{equation}
    \D=L_{12}\circ K_{24} -(-1)^{\Lt\Kt} K_{13}\circ L_{34}
\end{equation}
(we assume that the parities agree so that $\tilde L_{12}=\tilde L_{34}=:\Lt$ and $\tilde K_{24}=\tilde K_{13}=:\Kt$). It is not particularly interesting if we do not assume any relation between the operators. Suppose further that
\begin{equation}
    {\ps_{42}}_*\left(\symb(L_{12})\right)= \ps_{31}^*\left(\symb(L_{34})\right) \quad \text{and} \quad
    \f_{21}^*\left(\symb(K_{24})\right)= {\f_{43}}_*\left(\symb(K_{13})\right)\,.
\end{equation}
It follows that $\symb(\D)=0$, by the commutativity of the product of symbols. Hence $\D$ is divisible by $\hbar$. We can define an analog of the Poisson bracket, by
\begin{equation}\label{eq.ourbrack}
    \{H_{12}, H_{34}\,;\,F_{24},F_{13}\}:=\symb\left(\frac{i}{\hbar}\D\right)\,,
\end{equation}
where we denoted $H_{12}=\symb(L_{12})$, $H_{34}=\symb(L_{34})$, and
$F_{24}=\symb(K_{24})$, $F_{13}=\symb(K_{13})$\,. We hope to investigate   this operation elsewhere.

\section{Constructions and examples}\label{sec.ex}

In this section we consider constructions leading to (formal, $\hbar$-) differential operators over maps. We note that in the same way as familiar differential operators on $\R{n}$ or on a manifold, differential operators over maps can be defined by integral formulas using various forms of ``full symbol calculus''. Consider first the simplest case of maps between Cartesian spaces.
Let $\f\co \R{n_1}\to \R{n_2}$ be a smooth map. Then by the definition of a formal $\hbar$-differential operator over $\f$, every such operator $L\co \funh(M_2)\to \funh(M_1)$ can be expressed as
\begin{equation}\label{eq.int}
    L(g)(x_1)=\int_{\R{2n_2}} dx_2\dbar p_2\; e^{\frac{i}{\hbar}(\f(x_1)-x_2) p_2}\, H_{\hbar}(x_1,p_2)\,g(x_2)\,.
\end{equation}
Here we use standard notations such as $\dbar p$ for denoting coordinate volume element normalized so that it contains all numerical factors depending on dimension arising in inversion formulas for $\hbar$-Fourier transform. (In the supercase, we use similar notation e.g. $\Dbar p$ etc.) Here $H_{\hbar}(x_1,p_2)$ is a formal series (in $p_i$ and $\hbar$) of the form
\begin{equation}\label{eq.ampl}
    H_{\hbar}(x_1,p_2) = \sum_{k=0}^{+\infty} \left(H^{i_1\ldots i_k}_0(x_1)p_{i_1}\ldots p_{i_k}+(-i\hbar)H^{i_1\ldots i_{k-1}}_1(x_1)p_{i_1}\ldots p_{i_{k-1}}+\ldots + H_k^0(x_1)\right)\,.
\end{equation}
The coefficients $H^{i_1\ldots i_k}(x_1)$ etc. do not depend on $\hbar$. We refer to the function $H_{\hbar}(x_1,p_2)$ as the \emph{full symbol} of $L$. If we need a notation, we shall write $\symbf(L)$. One can find the full symbol of $L$ by the formula
\begin{equation}\label{eq.fullsymb}
    \symbf(L)=e^{-\frac{i}{\hbar}\f(x_1)p_2}L(e^{\frac{i}{\hbar}x_2p_2})\,.
\end{equation}

It is clear that instead of formal power series one can consider functions from different classes as long as the integral makes sense and this will give various types of ``$\hbar$-pseudodifferential operators over a map $\f$''. They all will be of course particular examples of Fourier integral operators~\cite{hoer:fio1-1971}.

The full symbol given by with~\eqref{eq.ampl} and \eqref{eq.fullsymb} and the principal symbol defined in the previous section are related by
\begin{equation}\label{eq.twosymb}
    \symb(L)=\symbf(L)_{|\hbar=0}\,,
\end{equation}
or, in terms of the expansion~\eqref{eq.ampl},
\begin{equation}\label{eq.prinsymb}
    \symb(L)=\sum_{k=0}^{+\infty}  H^{i_1\ldots i_k}_0(x_1)p_{i_1}\ldots p_{i_k}\,.
\end{equation}

Everything above can be done in the supercase, replacing $\R{n_1}\to \R{n_2}$ by $\R{n_1|m_1}\to \R{n_2|m_2}$. It makes no principle difference, so we do not dwell on that.

Generalization from   Cartesian spaces to (super)manifolds of formulas~\eqref{eq.int}--\eqref{eq.fullsymb} can be done in two different ways. In the first approach, one can simply consider integral formulas such as~\eqref{eq.int} in   coordinate domains and require  that they specify an operator independent on a choice of coordinates. If we write the same formula as~\eqref{eq.int} for manifolds $M_1$ and $M_2$ and a map $\f\co M_1\to M_2$ as
\begin{equation}\label{eq.intman}
    L(g)(x_1)=\int_{T^*M_2} dx_2\dbar p_2\; e^{\frac{i}{\hbar}(\f(x_1)-x_2) p_2}\, H_{\hbar}(x_1,p_2)\,g(x_2)\,,
\end{equation}
then the function $H_{\hbar}(x_1,p_2)$ will not be an invariantly-defined function on $\f^*T^*M_2$\footnote{But the function $H_{0}(x_1,p_2)$ obtained by setting $\hbar=0$ will be a well-defined function on $\f^*T^*M_2$, hence the ``non-invariance'' of the full symbol can be seen as ``quantum corrections''.}, but instead will have a non-trivial transformation law, similar with the transformation law for full symbols of (pseudo)differential operators on manifolds. 
Another approach can be based on choosing an extra structure on manifolds in question such as a connection and a volume element and/or a metric. Then the ``non-invariance'' of integral formulas will be packed instead of a dependence on a choice of coordinates into a dependence of a choice such an extra structure (e.g. connection). Let us give the corresponding formulas. (Note that there may be slightly non-equivalent ways for writing them, and we use full symbol calculus for pseudodifferential operators on Riemannian manifolds built in~\cite{tv:forms} as prototype.)

Let $H\in \funh(\f^*T^*M_2)$ for a map $\f\co M_1\to M_2$. Define the  operator $\hat H\co \funh(M_2)\to \funh(M_1)$ by the formula
\begin{equation}\label{eq.intconn}
    (\hat H g)(x_1)= \int\limits_{T^*_{\f(x_1)}M_2\times T_{\f(x_1)}M_2}
    dv_2\dbar p_2\; e^{-\frac{i}{\hbar}v_2p_2}\, H(x_1,p_2)\,g(\exp_{\f(x_1)}v_2)\
\end{equation}
We do not place $\hbar$ explicitly in the notation for $H$, in part for   distinction with $H_{\hbar}$ in formula~\eqref{eq.intman} (but $H$ still is a power series in $\hbar$).  Here $\exp$ is the exponential mapping defined by a connection on $M_2$. 
By a change of variables $x_2=\exp_{\f(x_1)}v_2$, it is possible to rewrite~\eqref{eq.intconn} also as
\begin{equation}\label{eq.intconn2}
    (\hat H g)(x_1)= \int_{M_2\times T^*_{\f(x_1)}M_2}
    dx_2\dbar p_2\; \mu(x_1,x_2)\,e^{\frac{i}{\hbar}\exp^{-1}_{\f(x_1)}(x_2)\,p_2}\, H(x_1,p_2)\,g(x_2)
\end{equation}
or
\begin{equation}\label{eq.intconn3}
    (\hat H g)(x_1)= \int_{T^*M_2}
    dx_2\dbar p_2\; \mu(x_1,x_2)\,e^{-\frac{i}{\hbar}\exp^{-1}_{x_2}(\f(x_1))\,p_2}\, H(x_1,\tau(\f(x_1),x_2)p_2)\,g(x_2)
\end{equation}
Here $\mu(x_1,x_2)$ is some Jacobian function arising from a change of variables and $\tau(x_2,x_2')$ is the parallel translation along the geodesic joining $x_2$ and $x_2'$.

Define a class of functions on $\f^*T^*M$ that are power series in momentum variables and $\hbar$ together. In particular, this class includes polynomials. We will continue to use the notation $\funh(\f^*T^*M)$ meaning this class.

Strictly speaking, the exponential mapping ceases being invertible for large tangent vectors, so one may wish to insert some bump function into the integrals to take care of that (e.g. like it is done in~\cite{tv:forms}). However, for the functions $H$ that we consider it is not necessary, since their $\hbar$-Fourier transform is supported at the graph $x_2=\f(x_1)$.

\begin{proposition}
For every $H\in \funh(\f^*T^*M_2)$, the
operator  given by \eqref{eq.intconn}, \eqref{eq.intconn2} \eqref{eq.intconn3} is a formal $\hbar$-differential operator over $\f$.
\end{proposition}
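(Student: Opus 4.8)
The plan is to reduce the statement to the already-established local coordinate description of formal $\hbar$-differential operators over $\f$, namely the characterization~\eqref{eq.locexpr}, by computing what the integral operator~\eqref{eq.intconn} does in a coordinate chart. First I would work in a coordinate domain on $M_2$ around $\f(x_1)$, using normal coordinates centered at $\f(x_1)$ determined by the connection, so that $\exp_{\f(x_1)}v_2$ has a simple expression in those coordinates; equivalently, one may start from the form~\eqref{eq.intconn2}, where the phase is $\tfrac{i}{\hbar}\exp^{-1}_{\f(x_1)}(x_2)\cdot p_2$ and the Jacobian $\mu(x_1,x_2)$ is smooth with $\mu(x_1,\f(x_1))\neq 0$. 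The key point, already flagged in the text, is that the $\hbar$-Fourier transform in $p_2$ of $H(x_1,p_2)$ — a formal power series in $p_2$ and $\hbar$ — is a finite-order distribution supported at the origin $v_2=0$, i.e.\ a finite linear combination of $\d(v_2)$ and its derivatives with coefficients that are formal power series in $\hbar$ depending on $x_1$.

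The main computation is then a stationary-phase/localization argument made exact: expanding $g(\exp_{\f(x_1)}v_2)$ in its Taylor series in $v_2$ at $v_2=0$ and integrating against the distributional $\hbar$-Fourier transform of $H$, one picks out finitely many terms, each of which is (a coefficient depending on $x_1$) times (a partial derivative $\p_y^{\,\al}g$ evaluated at $y=\f(x_1)$), times a power of $\hbar$. Concretely, each factor of $p_{2,i}$ in a monomial of $H$ produces, after the Gaussian/Fourier integral, a derivative $-i\hbar\,\p/\p v_2^i$ hitting $g\circ\exp$, and the chain rule converts $\p/\p v_2^i$ at $v_2=0$ into $\p/\p y^j$ at $y=\f(x_1)$ composed with the (invertible, connection-dependent) Jacobian of $\exp_{\f(x_1)}$ at $0$; the lower-order-in-$\hbar$ terms of $H$, together with the Taylor remainder of $g\circ\exp$ and of $\mu$, contribute the lower-degree pieces. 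Collecting terms by the degree from~\eqref{eq.defdeg}, the result is exactly an expression of the form $\sum_{|\al|} L_\al(x_1)\,\hpa g$ with $L_\al$ power series in $\hbar$, i.e.\ a formal $\hbar$-d.o.\ over $\f$ in the sense of~\eqref{eq.locexpr}, and summing over degrees gives the claimed formal operator~\eqref{eq.formal}.

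I would organize the steps as: (1) localize to a chart on $M_2$ near $\f(x_1)$ and reduce~\eqref{eq.intconn} to~\eqref{eq.intconn2}; (2) record that, for $H$ in the class $\funh(\f^*T^*M_2)$, the fiberwise $\hbar$-Fourier transform $\check H(x_1,v_2):=\int \dbar p_2\, e^{-\frac{i}{\hbar}v_2 p_2} H(x_1,p_2)$ is $\sum_{\text{finite}} (-i\hbar)^{|\be|} c_{\be}(x_1,\hbar)\,\p_{v_2}^{\be}\d(v_2)$, degree-by-degree; (3) plug this in, do the (now trivial, distributional) $v_2$-integral against $\mu(x_1,x_2)\,g(x_2)$ written as a function of $v_2$ via $x_2=\exp_{\f(x_1)}(v_2)$, expanding everything in Taylor series in $v_2$; (4) apply the chain rule to re-express $\p_{v_2}$-derivatives at $v_2=0$ in terms of $\p_y$-derivatives of $g$ at $\f(x_1)$, absorbing Jacobian and $\mu$ factors into the coefficients; (5) check that the degree grading~\eqref{eq.defdeg} is respected, so that the degree-$k$ part has coefficients that are polynomials in $\hbar$ and the total sum is a formal $\hbar$-d.o.; (6) finally note independence of the choice of connection is not required for the statement (the operator is a formal $\hbar$-d.o.\ for \emph{any} choice, only the symbols $L_\al$ change), though one remarks that by the previous theorem $\symb(\hat H)=H$ modulo $\hbar$, so the leading behavior is connection-independent.

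The step I expect to be the main obstacle is~(2) together with the bookkeeping in~(3)–(4): making precise, and uniform in $\hbar$, the claim that the $\hbar$-Fourier transform of a formal power series $H(x_1,p_2)$ in $p_2$ (with $\hbar$-dependent coefficients) is a well-defined $\hbar$-dependent distribution supported at the origin, and then controlling the interaction of the Taylor expansions of $g\circ\exp_{\f(x_1)}$ and of $\mu$ with the (arbitrarily high but, degree-by-degree, finite) order of that distribution — i.e.\ verifying that at each fixed degree $k$ in the sense of~\eqref{eq.defdeg} only finitely many Taylor coefficients of $g\circ\exp$ enter, so that the output genuinely lies in $\hDO(M_1\fto{\f}M_2)$ rather than merely in some completed space. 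Once this finiteness-per-degree is pinned down, everything else is the routine chain-rule computation already rehearsed, in essentially the same way as the full symbol calculus of~\cite{tv:forms} that the authors cite as prototype.
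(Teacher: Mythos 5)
Your proposal is correct and follows essentially the same route as the paper, whose entire proof is the one-line instruction to use normal coordinates centered at $x_2=\f(x_1)$ and express the integral in those coordinates. Your steps (1)--(5) are simply a careful expansion of that instruction, with the localization of the $\hbar$-Fourier transform of $H$ at $v_2=0$ (which the paper itself flags in the surrounding text) doing the work of reducing the integral to the local form~\eqref{eq.locexpr}.
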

\begin{proof}
Use normal coordinates centered at $x_2=\f(x_1)$ and express the integral in these coordinates.
\end{proof}

\begin{example}
Consider an integral operator defined by the formula
\begin{equation}\label{eq.qthick}
    L(g)(x_1)=\int_{T^*M_2} dx_2\dbar p_2\; e^{\frac{i}{\hbar}(S(x_1,p_2))-x_2p_2}g(x_2)\,,
\end{equation}
where $S(x_1,p_2)$ is a power series in $p_2$. We call $S$ a (quantum)   \emph{generating function}. Compare with~\eqref{eq.intman}: in~\eqref{eq.qthick} there is no ``amplitude'' $H(x_1,p_2)$ in front of the oscillating exponential, but instead of $\f(x_1)p_2$ in the exponential there is $S(x_1,p_2)$. If we   express $S(x_1,p_2)$ as
\begin{equation}\label{eq.s}
    S(x_1,p_2)=S^0_{\hbar}(x_1)+\f^i_{\hbar}(x_1)p_{2i}+ S^{+}_{\hbar}(x_1,p_2)\,,
\end{equation}
where $S^{+}_{\hbar}(x_1,p_2)$ contains terms of order $\geq 2$ in $p_2$, it becomes possible to rewrite~\eqref{eq.qthick} in the same form as~\eqref{eq.intman} as its special case.
\end{example}

\begin{theorem}[\cite{tv:qumicro}]
The operator defined by~\eqref{eq.qthick} is a formal $\hbar$-differential operator over a map $\f\co M_1\to M_2$, of the form
\begin{equation}\label{eq.explic}
    L=e^{\frac{i}{\hbar}S^0(x_1)}\left(e^{\frac{i}{\hbar} S^{+}(x_1,\frac{\hbar}{i}\der{}{x_2})}\right)_{x_2=\f_{\hbar}(x_1)}\,.
\end{equation}
\end{theorem}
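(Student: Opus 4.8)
The plan is to evaluate the oscillatory integral in~\eqref{eq.qthick} by the stationary-phase/Fourier-transform principle, treating the series structure of $S$ rather than asymptotics. First I would substitute the decomposition~\eqref{eq.s} into the exponent, so that the phase becomes $\tfrac{i}{\hbar}\bigl(S^0_\hbar(x_1)+\f_\hbar(x_1)p_2+S^+_\hbar(x_1,p_2)-x_2p_2\bigr)$. The factor $e^{\frac{i}{\hbar}S^0_\hbar(x_1)}$ pulls out of the integral immediately, giving the scalar prefactor in~\eqref{eq.explic}. What remains is
\begin{equation*}
    e^{\frac{i}{\hbar}S^0_\hbar(x_1)}\int_{T^*M_2} dx_2\dbar p_2\; e^{\frac{i}{\hbar}\bigl(\f_\hbar(x_1)-x_2\bigr)p_2}\,e^{\frac{i}{\hbar}S^+_\hbar(x_1,p_2)}\,g(x_2)\,,
\end{equation*}
which is exactly of the form~\eqref{eq.intman} with amplitude $H_\hbar(x_1,p_2)=e^{\frac{i}{\hbar}S^+_\hbar(x_1,p_2)}$ and core $\f_\hbar$. (One must check this amplitude lies in the admissible class: since $S^+_\hbar$ starts at order $2$ in $p_2$, each power of $\hbar^{-1}$ is accompanied by at least two momenta, so when the exponential is expanded every term is a genuine polynomial-times-power-of-$\hbar$ contribution and the degree grading of the previous section is respected — this is the place where the ``order $\geq 2$'' hypothesis is essential.)

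Next I would recall the elementary identity for the $\hbar$-Fourier kernel: for any symbol $A(x_1,p_2)$ that is a formal power series in $p_2$,
\begin{equation*}
    \int_{T^*M_2} dx_2\dbar p_2\; e^{\frac{i}{\hbar}(z-x_2)p_2}\,A(x_1,p_2)\,g(x_2) = \Bigl(A\bigl(x_1,\tfrac{\hbar}{i}\tder\bigr)g\Bigr)(x_2)\Big|_{x_2=z}\,,
\end{equation*}
where $\tder$ abbreviates $\partial/\partial x_2$; this is just the statement that $\int \dbar p_2\, e^{\frac{i}{\hbar}(z-x_2)p_2}p_2^\alpha = \bigl(\tfrac{\hbar}{i}\partial_{x_2}\bigr)^\alpha\delta(z-x_2)$ followed by integration against $g$, applied term by term in the power series. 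Applying this with $z=\f_\hbar(x_1)$ and $A=e^{\frac{i}{\hbar}S^+_\hbar(x_1,p_2)}$ converts each monomial $p_2^\alpha$ into $(\tfrac{\hbar}{i}\partial_{x_2})^\alpha$ acting on $g$, then sets $x_2=\f_\hbar(x_1)$ — which is precisely the right-hand side of~\eqref{eq.explic}. That the resulting operator is then a formal $\hbar$-d.o.\ over $\f$ of the stated form follows because its local expression is manifestly of the shape~\eqref{eq.locexpr} (coefficients depending on $x_1$ and $\hbar$, each $\partial_{x_2}$ carrying a factor $\tfrac{\hbar}{i}$), so one invokes the local characterization~\eqref{eq.locdefh}–\eqref{eq.locexpr} established earlier.

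I expect the main subtlety to be twofold. First, making the operator $S^+(x_1,\tfrac{\hbar}{i}\partial_{x_2})$ well-defined and well-ordered: in~\eqref{eq.explic} the derivatives $\partial/\partial x_2$ must all act to the right on $g$ \emph{before} the substitution $x_2=\f_\hbar(x_1)$, and one should be careful that the $x_1$-dependence in the coefficients of $S^+$ does not interfere with this — this is a normal-ordering (``$\overset{1}{x}\,\overset{2}{p}$'') bookkeeping issue, best handled exactly as in the notation $\Hqu$ introduced in the preamble. Second, convergence/meaning of the formal integral: strictly the integral in~\eqref{eq.qthick} is shorthand, and the honest content is the term-by-term identity above; on a general supermanifold one should note (as remarked after~\eqref{eq.intconn3}) that the $\hbar$-Fourier transform of the admissible amplitudes is supported on the graph $x_2=\f(x_1)$, so no bump functions are needed and the manipulations are legitimate. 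Since the statement is quoted from~\cite{tv:qumicro}, I would in the write-up keep the argument to this sketch and refer there for the full details.
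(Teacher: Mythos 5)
The paper itself gives no proof of this theorem---it is imported from \cite{tv:qumicro}---but the Example immediately preceding it already records your first step (decomposing $S$ via~\eqref{eq.s} and recognizing~\eqref{eq.qthick} as the special case of~\eqref{eq.intman} with amplitude $e^{\frac{i}{\hbar}S^{+}_{\hbar}}$ over the core $\f_{\hbar}$), and your completion by term-by-term $\hbar$-Fourier inversion is exactly the argument of the cited reference, so your proposal is correct and follows essentially the same route. The one point worth stating more carefully is that the expanded amplitude has coefficients carrying \emph{negative} powers of $\hbar$ (a term with $n$ factors of $\frac{i}{\hbar}S^{+}_{\hbar}$ contributes $\hbar^{-n}$ against at least $2n$ momenta), so it is the nonnegativity of the total degree, $\deg \hbar^{-n}p^{K}=K-n\geq n\geq 0$, rather than literal membership in the class~\eqref{eq.ampl}, that makes $L$ a formal $\hbar$-differential operator in the graded sense of~\eqref{eq.formal}.
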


Here $\f_{\hbar}$ is an  \emph{$\hbar$-perturbation  of a smooth map} $\f\co M_1\to M_2$. It is given in local coordinates by formulas $y^i=\f^i_{\hbar}(x)$, where $\f^i_{\hbar}(x)=\f_0^i(x)+\hbar \f_1^i(x)+\ldots $ is a formal power series such that $\f^i_0(x)=\f^i(x)$ specify the initial map $\f$, and these local descriptions transform appropriately on the intersections of coordinate charts. With an abuse of language we   speak simply of a map $\f_{\hbar}\co M_1\to M_2$   ``depending on $\hbar$''.

Operators~\eqref{eq.qthick} were introduced in~\cite{tv:oscil} as ``pullbacks by quantum thick morphisms''.
A distinctive feature of such operators   is that in the classical limit obtained by the stationary phase method, see~\cite{tv:microformal}, they give pullbacks by classical \emph{thick morphisms} $\F^*$, introduced in~\cite{tv:nonlinearpullback}, which are formal non-linear differential operators over smooth maps,
\begin{equation*}
    \F^*(g)=\F^*_{[0]}(g)+ \F^*_{[1]}(g)+ \F^*_{[2]}(g)+\ldots
\end{equation*}
where each summand $\F^*_{[k]}(g)$ is a non-linear differential operator over a map applied to $g$ of order $k$ in $g$, expansion over $k$, and among all such nonlinear operators pullbacks by thick morphisms are distinguished because they are \emph{non-linear algebra homomorphisms}. By definition~\cite{tv:microformal}, it is such a (formal) map of algebras that its derivative for every element is a usual algebra homomorphism.

\begin{theorem}
If a formal non-linear operator $L\co \fun(M_2)\to \fun(M_1)$   is a non-linear algebra homomorphism, then $L=\F^*$, the pullback by some (unique) thick morphism $\F\co M_1\tto M_2$.
\end{theorem}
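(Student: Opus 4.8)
The plan is to reconstruct the generating function $S(x,q)$ of the would-be thick morphism $\F$ from the action of $L$ on \emph{linear} functions, and then to exploit the non-linear homomorphism property to show that this $S$ reproduces all of $L$.

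\textbf{Step 1 (the derivative of $L$ is a substitution operator).} By the definition of a non-linear algebra homomorphism recalled above, the variational derivative $L'[g]\co\fun(M_2)\to\fun(M_1)$ is an algebra homomorphism for every $g\in\fun(M_2)$. A homomorphism between algebras of functions is a pull-back by a (formal) smooth map, so $L'[g]=\psi_g^*$, i.e.\ $L'[g](h)(x)=h(\psi_g(x))$, for a map $\psi_g\co M_1\to M_2$. Differentiating the local form~\eqref{eq.nonlindef} identifies $L'[0]$ with the linear part of $L$, a \emph{finite}-order linear differential operator over the core map $\f$; being a unital homomorphism it must equal $\f^*$, whence $\psi_0=\f$ and $\psi_g=\f+O(g)$ in general. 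Iterating: the homomorphism property of $L'[g]$, together with the automatic symmetry of the higher variational derivatives $L^{(n)}[0]$, forces (order by order in $g$) each homogeneous piece $L_{[k]}$ to be assembled from \emph{symmetric} tensor fields along $\f$ --- e.g.\ $L_{[2]}(g)=\tfrac12 W^{ij}(x)\,\p_i g(\f(x))\,\p_j g(\f(x))$ with $W^{ij}=W^{ji}$, and similarly in higher degree --- so that $L$ is completely determined by its values on linear functions.

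\textbf{Step 2 (extracting $S$).} Work in a chart on $M_2$ with coordinates $y^i$; for a covector $q=(q_i)$ let $\ell_q:=q_iy^i$, a function all of whose second and higher derivatives vanish, so that by~\eqref{eq.nonlindef} (and by locality of $L$) the value $L(\ell_q)(x)$ is a formal power series in $q$. Put
\begin{equation*}
S(x,q):=L(\ell_q)(x).
\end{equation*}
Its $q$-independent term is $L(0)(x)$ and its linear-in-$q$ term is $\f^i(x)q_i$, and one checks --- using the coordinate invariance of $L$ --- that these chart-wise expressions obey the transformation law of a generating function of a thick morphism (they do \emph{not} patch into a function on $\f^*T^*M_2$; the ``non-invariance'' is precisely that of generating functions). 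Geometrically, the assignment $g\mapsto\bigl(d(Lg)(x),\,\psi_g(x),\,dg(\psi_g(x))\bigr)$ sweeps out a formal Lagrangian submanifold of $T^*M_1\times\overline{T^*M_2}$, projectable onto the $(x,q)$-variables, whose generating function is this $S$. Let $\F\co M_1\tto M_2$ be the thick morphism with generating function $S$.

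\textbf{Step 3 ($L=\F^*$ and uniqueness).} The pull-back $\F^*$ is given by the critical-value formula behind~\eqref{eq.qthick}--\eqref{eq.explic}, namely $\F^*(g)(x)=\bigl[\,S(x,q)+g(y)-y^iq_i\,\bigr]$ evaluated at the point critical in $y$ and $q$. For $g=\ell_q$ the critical equations give $q'=q$ and $y=\p S/\p q(x,q)$, and the critical value collapses to $S(x,q)$; hence $\F^*(\ell_q)=S(\cdot,q)=L(\ell_q)$ for all $q$ (in particular $\F^*(0)=L(0)$). By the envelope theorem $\F^{*\prime}[g]$ is again a substitution operator, so $\F^*$ is a non-linear algebra homomorphism; by Step 1 such an operator is determined by its values on linear functions, and therefore $\F^*=L$. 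Uniqueness is then immediate, since the recipe $S(x,q)=L(\ell_q)(x)$ recovers the generating function, hence $\F$, from $L=\F^*$.

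\textbf{The main obstacle.} Steps 1 and 3 are essentially formal bookkeeping with the order filtration and the degree expansion in $g$. The substantive work is in Step 2: first, translating the homomorphism property of each $L'[g]$ (equivalently, the symmetry of all higher variational derivatives of $L$) into the precise integrability conditions that allow the swept-out submanifold to be presented by a single formal $S$; and second, verifying that the chart-wise formulas $L(\ell_q)$ transform as a genuine thick-morphism generating function and so glue to a well-defined $\F$. Once this is settled, the comparison with the explicit description~\eqref{eq.qthick}--\eqref{eq.explic} of $\F^*$ is routine.
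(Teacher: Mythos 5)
A preliminary remark: the paper itself contains \emph{no} proof of this theorem --- it only records the statement and attributes the proof to H.~Khudaverdian --- so there is no in-paper argument to compare you against. On its own terms, your architecture is the natural one: the derivative $L'[g]$ is a genuine algebra homomorphism, hence a pullback $\psi_g^*$ by a map $\psi_g\co M_1\to M_2$; the generating function is recovered on linear functions as $S(x,q):=L(\ell_q)(x)$, and your check that $\F^*(\ell_q)(x)=S(x,q)$ for a thick morphism with generating function $S$ is correct and is indeed the right way to pin down uniqueness.

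The genuine gap is in Step~1, and it is not ``formal bookkeeping''. The claim that each homogeneous piece $L_{[k]}$ is a symmetric tensor contracted with first derivatives of $g$ only, ``and similarly in higher degree'', is false already for the model answer: expanding $\F^*(g)$ to third order in $g$, the fixed-point equations $q_i=\p_i g(y)$, $y^i=\p S/\p q_i(x,q)$ produce terms containing $\p_i\p_j g(\f(x))$, so for $k\ge 3$ the pieces $L_{[k]}$ \emph{do} involve higher derivatives of $g$. Consequently a non-linear homomorphism is not determined by its values on linear functions for the naive reason you give; that determination claim is essentially equivalent to the theorem, and Step~3 concludes $L=\F^*$ precisely by invoking it, so as written the argument is circular at its crucial point. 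What has to be proved (and what you correctly flag as ``the main obstacle'' but do not carry out) is that the symmetry of the higher variational derivatives forces $\psi_g$ to depend on $g$ only through $dg$ evaluated at the image point, via a fixed-point equation $\psi_g(x)=\p S/\p q\bigl(x,dg(\psi_g(x))\bigr)$ governed by a single $S$; then the higher-derivative terms of $L_{[k]}$ are exactly those generated by iterating this equation, and $L(g)(x)=L(0)(x)+\int_0^1 g(\psi_{tg}(x))\,dt$ collapses to the critical-value formula. Your $k=2$ computation (symmetry of $L''$ forcing $W^{ij}=W^{ji}$ and first-order dependence) is the correct first step of that analysis, but the induction for all orders, together with the verification that the chart-wise $S$ transforms as a thick-morphism generating function, is the actual content of the theorem and is missing.
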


This statement has been   recently proved by H.~Khudaverdian. (It was conjectured  by the second author, see~\cite{tv:microformal}.)

It is possible to specify quantum thick morphisms by an ``invariant'' generating function (depending on  a connection). The corresponding formula will be
\begin{equation}\label{eq.invthick}
    L(g)(x_1)=\int\limits_{T^*_{\f(x_1)}M_2\times T_{\f(x_1)}M_2}
    dv_2\dbar p_2\; e^{\frac{i}{\hbar}(S^0(x_1)+\bar S^{(+)}(x_1,p_2)-v_2p_2)}\,g(\exp_{\f(x_1)}v_2)\
\end{equation}
Here we had to explicitly identify the carrier map $\f\co M_1\to M_2$. The function $\bar S^{(+)}(x_1,p_2)$ is a global function on $\f^*T^*M_2$.

The last example that  we shall consider is ``quantization of symplectic micromorphisms'' as introduced in~\cite{cattaneo-dherin-weinstein:quantization}. A ``symplectic micromorphism'' in the terminology of Cattaneo, Dherin and Weinstein is a morphism between ``symplectic microfolds''; a symplectic microfold is a germ of a symplectic manifold at a Lagrangian submanifold, by Weinstein's symplectic tubular neighborhood theorem it can be identified with the germ of a cotangent bundle. A symplectic micromorphism between such germs is defined as   (the germ of) a  canonical relation which is ``close'' to the relation corresponding to a map of bases. ``Close'' basically means that it can be specified by a generating function of the type $S(x_1,p_2)$, i.e. exactly the same as in the definition of thick morphisms (classical or quantum). We can say that symplectic micromorphisms and thick morphisms are very close, the difference being like between a germ and a jet (and also in the presence of $S^0(x_1)$ for thick morphisms). A ``quantization of a  symplectic micromorphism'' is a linear integral operator of the form very close to~\eqref{eq.invthick}:
\begin{equation}\label{eq.qsympl}
    L(g)(x_1)=\int\limits_{T^*_{\f(x_1)}M_2\times T_{\f(x_1)}M_2}
    dv_2\dbar p_2\; e^{\frac{i}{\hbar}(\bar S^{(+)}(x_1,p_2)-v_2p_2)}\,H(x_1,p_2)\,g(\exp_{\f(x_1)}v_2)\,.
\end{equation}
(In~\cite{cattaneo-dherin-weinstein:quantization} they consider operators acting on half-densities, but this makes no essential difference. Also, $\exp$ is not necessarily defined by a connection.) It is assumed that $\bar S^{(+)}(x_1,p_2)$ as a function of $p_2$ has zero of order two at $p_2$. We see as the main difference with pull-backs by thick morphisms the presence of the function $H(x_1,p_2)$, which is a genuine function on $\f^*T^*M_2$. Also, no term $S^0(x_1)$ in the exponential. The particular case of $\bar S^{(+)}(x_1,p_2)=0$ is called in~\cite{cattaneo-dherin-weinstein:quantization} ``quantization of cotangent lift''. It is in fact the same as an $\hbar$-differential operator over $\f$ written in an integral firm as \eqref{eq.intman} or \eqref{eq.intconn}. Moreover, a closer look shows that the class of operators obtained by formula~\eqref{eq.qsympl} is not different from the   class of operators over a map. If $\hbar$ is treated as a formal parameter, one can see that only the Taylor expansions of $\bar S^{(+)}(x_1,p_2)$ and $H(x_1,p_2)$ play a role and we have the following statement.  (See also remark below.)

\begin{theorem}
If $\hbar$ is regarded as a formal parameter, then the class of operators obtained as ``quantization of symplectic micromorphisms''  coincides with the class of all formal $\hbar$-differential operators over smooth maps.
\end{theorem}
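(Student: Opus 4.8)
The plan is to prove the equality of classes by two inclusions. Fix (super)manifolds $M_1$, $M_2$ and a smooth map $\f\co M_1\to M_2$: the carrier of an operator of the form~\eqref{eq.qsympl} is precisely $\f$, so it suffices to treat a fixed $\f$ and then take the union over all $\f$. Write $\mathcal{B}=\hDO(M_1\fto{\f}M_2)$, and let $\mathcal{A}$ denote the class of operators of the shape~\eqref{eq.qsympl} with $\bar S^{(+)}(x_1,p_2)$ vanishing to order two in $p_2$ and with $H$ a genuine function on $\f^*T^*M_2$, i.e. $H\in\funh(\f^*T^*M_2)$.

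\emph{The inclusion $\mathcal{B}\subseteq\mathcal{A}$.} It is enough to realise every $L\in\hDO(M_1\fto{\f}M_2)$ as a ``quantization of the cotangent lift'', i.e. as~\eqref{eq.qsympl} with $\bar S^{(+)}=0$, which by~\eqref{eq.intconn}--\eqref{eq.intconn3} is the same as $\hat H$ for a suitable amplitude $H$. I would fix a connection and a volume element on $M_2$ and study the quantization map $H\mapsto\hat H$ of~\eqref{eq.intconn}; in normal coordinates centred at $\f(x_1)$ it is the ordinary $\hbar$-symbol quantization on $\R{n_2}$, so $\hat H$ is manifestly a formal $\hbar$-d.o.\ over $\f$. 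By~\eqref{eq.prinsymb} this map sends $H$ to an operator with principal symbol $H|_{\hbar=0}$, and its dependence on the higher $\hbar$-coefficients of $H$ is triangular; hence it is invertible as a map of formal power series in $\hbar$, and one recovers $H$ from a given $L$ recursively in powers of $\hbar$ by an explicit symbol-recovery formula analogous to~\eqref{eq.fullsymb}. Thus every $L\in\mathcal{B}$ lies in $\mathcal{A}$.

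\emph{The inclusion $\mathcal{A}\subseteq\mathcal{B}$.} Start from $L$ of the form~\eqref{eq.qsympl} and absorb the oscillating factor $e^{\frac{i}{\hbar}\bar S^{(+)}(x_1,p_2)}$ into the amplitude. Although this factor carries $\hbar^{-1}$, the assumption that $\bar S^{(+)}$ vanishes to order two in $p_2$ makes it a genuine element of $\funh(\f^*T^*M_2)$: the contribution of the $k$-th power of $\tfrac{i}{\hbar}\bar S^{(+)}$ is $\hbar^{-k}$ times a polynomial in $p_2$ of degree $\geq 2k$, and in~\eqref{eq.qsympl} each factor $p_2$ effectively contributes one power of $\hbar$, since $p_2$-integration against $e^{-\frac{i}{\hbar}v_2p_2}$ turns $p_2$ into $\tfrac{\hbar}{i}\p_{v_2}$ acting on $g(\exp_{\f(x_1)}v_2)$ at $v_2=0$; so the net power of $\hbar$ is $\geq k\geq 0$, only finitely many $k$ contribute to each power of $\hbar$, and the $p_2$-degree stays bounded at each order. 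Hence $H':=H\cdot e^{\frac{i}{\hbar}\bar S^{(+)}}\in\funh(\f^*T^*M_2)$ and~\eqref{eq.qsympl} becomes~\eqref{eq.intconn} with amplitude $H'$. Finally, because the $\hbar$-Fourier transform of $H'$ in $p_2$ is concentrated at $v_2=0$, only the full Taylor jet at $0$ of $v_2\mapsto g(\exp_{\f(x_1)}v_2)$ enters, so the ``$\exp$-like'' map of~\eqref{eq.qsympl} (which need not come from a connection) may be replaced by a connection exponential at the cost of a Jacobian factor and a reparametrisation of $p_2$ that are absorbed into $H'$ — the same manoeuvre that relates~\eqref{eq.intconn}, \eqref{eq.intconn2}, and~\eqref{eq.intconn3} — without leaving $\funh(\f^*T^*M_2)$. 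Therefore $L=\hat{H'}$ for a connection exponential and $H'\in\funh(\f^*T^*M_2)$, and by the Proposition above ($\hat H$ is a formal $\hbar$-d.o.\ for every $H\in\funh(\f^*T^*M_2)$) we get $L\in\mathcal{B}$.

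\emph{Main obstacle.} The heart of the argument is the bookkeeping in the second inclusion: the proof that $e^{\frac{i}{\hbar}\bar S^{(+)}}$ is ``$\hbar$-regular'' requires tracking the $p_2$-degree and the $\hbar$-order simultaneously, using the ``$p_2\leftrightarrow\tfrac{\hbar}{i}\p$'' correspondence built into~\eqref{eq.intconn}. A second, softer, difficulty is to make precise the informal assertion that ``only the Taylor expansions of $\bar S^{(+)}$ and $H$ matter'' — that the germ-versus-jet ambiguity inherent in the Cattaneo, Dherin and Weinstein construction, together with the freedom in the choice of $\exp$, is invisible at the level of formal $\hbar$-series — which is exactly what permits the reduction of~\eqref{eq.qsympl} to the connection-based formula~\eqref{eq.intconn} analysed earlier.
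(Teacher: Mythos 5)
Your proposal is correct in substance and follows the route the paper merely sketches: the paper offers no written proof, only the remark that the $\bar S^{(+)}=0$ case is literally an $\hbar$-differential operator over $\f$ in integral form and that ``only the Taylor expansions of $\bar S^{(+)}$ and $H$ play a role''. Your two inclusions make this precise, and the key bookkeeping in the second inclusion (the $k$-th power of $\tfrac{i}{\hbar}\bar S^{(+)}$ carries $\hbar^{-k}$ against $p_2$-degree $\geq 2k$, and under quantization each $p_2$ yields one power of $\hbar$, so the net $\hbar$-order is $\geq k\geq 0$ with only finitely many $k$ at each order) is exactly the mechanism that underlies the paper's earlier explicit formula~\eqref{eq.explic} for pullbacks by quantum thick morphisms.

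One point to tighten: the assertion that $H':=H\,e^{\frac{i}{\hbar}\bar S^{(+)}}$ itself belongs to $\funh(\f^*T^*M_2)$ is delicate, since as a function of $(p_2,\hbar)$ it contains monomials $\hbar^{-k}p_2^{\,d}$ with genuinely negative powers of $\hbar$, and the paper's class is defined as power series in the momenta and $\hbar$; only after the substitution $p_2\mapsto\tfrac{\hbar}{i}\p$ do all $\hbar$-powers become non-negative. So you should not route the conclusion through the Proposition (``$\hat H$ is a formal $\hbar$-d.o.\ for every $H\in\funh$''), but instead conclude directly from your own degree count: expanding the exponential and quantizing term by term exhibits the operator as a sum $L_{[0]}+L_{[1]}+\cdots$ of finite-type $\hbar$-d.o.'s of each degree, which is precisely the definition~\eqref{eq.formal} of a formal $\hbar$-differential operator over $\f$. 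With that rewording the argument is complete; your handling of the germ-versus-jet issue and of the non-connection $\exp$ (absorbing the Jacobian and reparametrisation into the amplitude, as in the passage from~\eqref{eq.intconn} to~\eqref{eq.intconn3}) is consistent with what the paper intends.
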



It we take the viewpoint that thick morphisms are generalizations of ordinary maps, then one may consider ``differential operators over thick morphisms''. The practical difference is what emerges as their symbols: if we have for example
\begin{equation*}
    L(g)(x_1)= \left(e^{\frac{i}{\hbar} S^{+}(x_1,\frac{\hbar}{i}\der{}{x_2})}
    \,H\Bigl(x_1,\frac{\hbar}{i}\der{}{x_2}\Bigr)\,g(x_2)\right)_{x_2=\f_{\hbar}(x_1)}\,,
\end{equation*}
then it is either $H(x_1,p_2)$, which may be polynomial in $p_2$, or   $H(x_1,p_2)\,e^{\frac{i}{\hbar} S^{+}(x_1,p_2)}$.


\begin{thebibliography}{10}

\bibitem{sga3:7}
M.~Artin, J.~E. Bertin, M.~Demazure, P.~Gabriel, A.~Grothendieck, M.~Raynaud,
  and J.-P. Serre.
\newblock {\em Sch\'{e}mas en groupes. {F}asc. 2b: {E}xpos\'{e}s 7a et 7b},
  volume 1963/64 of {\em S\'{e}minaire de G\'{e}om\'{e}trie Alg\'{e}brique de
  l'Institut des Hautes \'{E}tudes Scientifiques}.
\newblock Institut des Hautes \'{E}tudes Scientifiques, Paris, 1965.

\bibitem{cattaneo-dherin-weinstein:quantization}
Alberto~S. Cattaneo, Benoit Dherin, and Alan Weinstein.
\newblock Symplectic microgeometry {IV}: Quantization.
\newblock 18 Jul 2020, {\tt arXiv:2007.08167 [math.SG]}.

\bibitem{groth:ega4-4}
A.~Grothendieck.
\newblock \'{E}l\'{e}ments de g\'{e}om\'{e}trie alg\'{e}brique. {IV}. \'{E}tude
  locale des sch\'{e}mas et des morphismes de sch\'{e}mas {IV}.
\newblock {\em Inst. Hautes \'{E}tudes Sci. Publ. Math.}, (32):361, 1967.

\bibitem{hoer:fio1-1971}
Lars H\"ormander.
\newblock Fourier integral operators. {I}.
\newblock {\em Acta Math.}, 127(1-2):79--183, 1971.

\bibitem{koszul:crochet85}
Jean-Louis Koszul.
\newblock Crochet de {S}chouten-{N}ijenhuis et cohomologie.
\newblock {\em Ast\'erisque}, (Numero Hors Serie):257--271, 1985.
\newblock The mathematical heritage of \'Elie Cartan (Lyon, 1984).

\bibitem{shemy:koszul}
Ekaterina Shemyakova.
\newblock On a {Batalin-Vilkovisky} operator generating higher {Koszul}
  brackets on differential forms.
\newblock 2020.

\bibitem{vinog:1972}
A.~M. Vinogradov.
\newblock The algebra of logic of the theory of linear differential operators.
\newblock {\em Dokl. Akad. Nauk SSSR}, 205:1025--1028, 1972.

\bibitem{vinog:vkl1}
A.~M. Vinogradov, I.~S. Krasil'shchik, and V.~V. Lychagin.
\newblock {\em Introduction to the geometry of nonlinear partial differential
  equations}.
\newblock Nauka, Moscow, 1986.

\bibitem{tv:forms}
Th. Th. Voronov.
\newblock Quantization of forms on the cotangent bundle.
\newblock {\em Comm. Math. Phys.}, 205(2):315--336, 1999.

\bibitem{tv:qumicro}
Th.~Th. Voronov.
\newblock Quantum microformal morphisms of supermanifolds: an explicit formula
  and further properties.
\newblock \texttt{arXiv:1512.04163 [math-ph]}.

\bibitem{tv:oscil}
Th.~Th. Voronov.
\newblock Thick morphisms of supermanifolds and oscillatory integral operators.
\newblock {\em Russian Math. Surveys}, 71(4):784--786, 2016.

\bibitem{tv:nonlinearpullback}
Th.~Th. Voronov.
\newblock ``{N}onlinear pullbacks" of functions and {$L_{\infty}$-morphisms}
  for homotopy {P}oisson structures.
\newblock {\em J. Geom. Phys.}, 111:94--110, 2017.

\bibitem{tv:microformal}
Th.~Th. Voronov.
\newblock Microformal geometry and homotopy algebras.
\newblock {\em Proc. Steklov Inst. Math.}, 302:88--129, 2018.

\bibitem{tv:gradedmicro}
Th.~Th. Voronov.
\newblock Graded geometry, {$Q$}-manifolds, and microformal geometry.
\newblock {\em Fortschritte der Physik}, 67:1910023.

\end{thebibliography}
\def\cprime{$'$} \def\cprime{$'$}

\end{document}